\begin{document}

\title[\hfilneg \hfil Uniqueness of  meromorphic function sharing three small functions CM with its $n-$ exact difference]
{Uniqueness of meromorphic function sharing three small functions CM with its $n-$ exact difference}

\author[XiaoHuang Huang \hfil \hfilneg]
{XiaoHuang Huang}

\address{XiaoHuang Huang: Corresponding author\newline
Department of Mathematics, Shenzhen University, Shenzhen 518055, China}
\email{1838394005@qq.com}

\subjclass[2010]{30D35}
\keywords{ Uniqueness, meromorphic functions,  share small functions, differences}
\begin{abstract}
In this paper, we study the uniqueness of the shift of meromorphic functions. We prove: Let $f$ be a  non-constant meromorphic function satisfying $\rho_{2}(f)<1$, let $\eta$ be a non-zero complex number,  and let $a,b,c\in\hat{S}(f)$  be three  distinct  small functions. If $f$ and $\Delta^{n}_{\eta}f$ share $a,b,c$ CM, then $f\equiv \Delta^{n}_{\eta}f$.
\end{abstract}

\maketitle
\numberwithin{equation}{section}
\newtheorem{theorem}{Theorem}[section]
\newtheorem{lemma}[theorem]{Lemma}
\newtheorem{remark}[theorem]{Remark}
\newtheorem{corollary}[theorem]{Corollary}
\newtheorem{example}[theorem]{Example}
\newtheorem{problem}[theorem]{Problem}
\allowdisplaybreaks

\section{Introduction and main results}
Throughout this paper, we assume that the reader have a knowledge of  the fundamental results and the standard notations of the Nevanlinna value distribution theory. See(\cite{h3,y1,y2}). In the following, a meromorphic function $f$ means meromorphic in the whole complex plane. Define
 $$\rho(f)=\varliminf_{r\rightarrow\infty}\frac{log^{+}T(r,f)}{logr},$$
 $$\rho_{2}(f)=\varlimsup_{r\rightarrow\infty}\frac{log^{+}log^{+}T(r,f)}{logr}$$
by the order  and the hyper-order  of $f$, respectively. When $\rho_{0}(f)<\infty$, we say $f$ is of finite order.

By $S(r,f)$, we denote any quantity satisfying $S(r, f) = o(T(r, f))$, as $r\to \infty $ outside of a possible exceptional set of finite logarithmic measure. A meromorphic function $a(z)$ satisfying $T(r,a)=S(r,f)$ is called a small function of $f$.  We denote $S(f)$ as the family of all small meromorphic functions of $f$ which includes the constants in $\mathbb{C}$. Moreover, we define $\hat{S}(f)=S(f)\cup\{\infty\}$. We say that two non-constant meromorphic functions $f$ and $g$ share small function $a$ CM(IM) if $f-a$ and $g-a$ have the same zeros counting multiplicities (ignoring multiplicities).

 We say that two nonconstant meromorphic functions  $f$ and $g$ share small function $a$ CM(IM) if $f-a$ and $g-a$ have the same zeros counting multiplicities (ignoring multiplicities).   And we that $f(z)$ and $g(z)$ share $a$ CM almost if
$$N(r,\frac{1}{f-a})+N(r,\frac{1}{g-a})-2N(r,f=a=g)=S(r,f)+S(r,g).$$

Let $f(z)$ be a meromorphic function, and a finite complex number $\eta$, we define  its difference operators by
\begin{equation*}
\Delta_\eta f(z)=f(z+\eta)-f(z), \quad \Delta_\eta^{n}f(z)=\Delta_{\eta}^{n-1}(\Delta_\eta f(z)).
\end{equation*}

In 1977, Rubel and Yang \cite{ruy}  considered the uniqueness of an entire function and its derivative. They proved.

\

{\bf Theorem A} \ Let $f$ be a non-constant entire function, and let $a, b$ be two finite distinct complex values. If $f$ and $f'$
 share $a, b$ CM, then $f\equiv f'$.

In recent years, there has been tremendous interests in developing  the value distribution of meromorphic functions with respect to difference analogue,  see [2-9, 11-15, 20]. Heittokangas et al \cite{hkl} proved a similar result analogue of Theorem A concerning shift.

In 2011,  Heittokangas-Korhonen- Laine-Rieppo \cite{hkl} improved Theorem B to meromorphic function. They proved.

\

{\bf Theorem B} Let $f$ be a  non-constant meromorphic function of finite order, and let $\eta$ be a non-zero complex number. If $f$ and $f_{\eta}$ share three distinct values $a,b,c$ CM in the extended plane, then $f\equiv f_{\eta}$.

In 2014, Halburd-Korhonen-Tohge \cite{h3} investigated the relationship of characteristic functions between $f$ and $f_{\eta}$ in $\rho_{2}(f)<1$.

In 2019, Deng-Fang-liu \cite{lyf} considered to replace sharing values by sharing small functions. They proved

\

{\bf Theorem C}  Let $f$ be a  non-constant entire function of finite order, let $\eta$ be a non-zero complex number, $n$ a positive integer, and let $a\not\equiv\infty,b\not\equiv\infty$  be two  distinct  small functions of $f$. If $f$ and $\Delta^{n}_{\eta}f$ share $a,b$ CM, then $f\equiv \Delta^{n}_{\eta}f$.

\

{\bf Remark 1} Theorem C holds when $f$ is a non-constant meromorphic function of $\rho_{2}(f)<1$ such that $N(r,f)=S(r,f)$.

Next, Heittokangas, et. \cite{hkl1} proved.

\

{\bf Theorem D} Let $f$ be a  non-constant meromorphic function of finite order, let $\eta$ be a nonzero finite value, and let $a\not\equiv\infty$, $b\not\equiv\infty$  and $d\not\equiv\infty\in \hat{S}(f)$ be three distinct small functions  such that $a$, $b$ and $c$ are periodic functions with period $\eta$. If $f$ and $f_{\eta}$ share $a,b,c$ CM, then $f(z)\equiv f(z+\eta)$.

In this paper, we improve  Theorem D  from finite order to $\rho_{2}(f)<1$, the first exact difference operator to $n-$exact difference operators, and three distinct periodic functions to any three distinct small functions. We prove

\

{\bf Theorem 1}  Let $f$ be a  non-constant meromorphic function satisfying $\rho_{2}(f)<1$, let $\eta$ be a non-zero complex number,  and let $a,b,c\in\hat{S}(f)$  be three  distinct  small functions. If $f$ and $\Delta^{n}_{\eta}f$ share $a,b,c$ CM, then $f\equiv \Delta^{n}_{\eta}f$.

Recently, the author\cite{h} obtained

\

{\bf Theorem E}
 Let $f$ be a  transcendental entire function of finite order, let $\eta\neq0$ be a finite complex number, $n\geq1, k\geq0$  two  integers and let $a, b$ be two  distinct finite complex values. If $f$ and $(\Delta_{\eta}^{n}f)^{(k)}$ share $a$ CM and share $b$ IM, then $a=2b$, and either $f(z)\equiv(\Delta_{\eta}^{n}f(z))^{(k)}$ or
 $$f=be^{2(c_{1}z+d)}-2be^{c_{1}z+d}+2b,$$
$$(\Delta_{\eta}^{n}f)^{(k)}=be^{c_{1}z+d},$$
where $c_{1}=(-2)^{-\frac{n+1}{k}}$ for $k\geq1$ and $d$ are two finite constants. Especially, when $k=0$, there is only the case $f\equiv\Delta_{\eta}^{n}f$ occurs.

So it is naturally to raise a conjecture that

\

{\bf Conjecture}  Let $f$ be a  non-constant meromorphic function satisfying $\rho_{2}(f)<1$, let $\eta$ be a non-zero complex number,  and let $a,b,c\in\hat{S}(f)$  be three  distinct  small functions. If $f$ and $\Delta^{n}_{\eta}f$ share $a,b$ CM and share $c$ IM, is $f\equiv \Delta^{n}_{\eta}f$?

\

{\bf Example 1} \ Let $f(z)=\frac{e^{z}}{1-e^{-2z}}$, and let $\eta=\pi i$. Then $\Delta^{n}_{\eta}f(z)=(-2)^{n}\frac{-e^{z}}{1-e^{-2z}}$, and $f(z)$ and $\Delta^{n}_{\eta}f(z)$ share $0,\infty$ CM, but $f(z)\not\equiv \Delta^{n}_{\eta}f(z)$.

\

{\bf Example 2} \ Let $f(z)=e^{z}$, and let $\eta=\pi i$. Then $\Delta^{n}_{\eta}f(z)=(-2)^{n}e^{z}$, and $f(z)$ and $\Delta^{n}_{\eta}f(z)$ share $0,\infty$ CM, $f(z)$ and $\Delta^{n}_{\eta}f(z)$ attain different values everywhere in the complex plane, but $f(z)\not\equiv \Delta^{n}_{\eta}f(z)$.

Above two examples of  show that "3CM" is necessary.

\section{Some Lemmas}
\begin{lemma}\label{21l}\cite{h3} Let $f$ be a non-constant meromorphic function of $\rho_{2}(f)<1$,  and let $\eta$ be a non-zero complex number. Then
$$m(r,\frac{f_{\eta}}{f})=S(r, f),$$
for all r outside of a possible exceptional set $E_{1}$ with finite logarithmic measure.
\end{lemma}

\begin{lemma}\label{23l}\cite{h3}  Let $f$ be a non-constant meromorphic function of $\rho_{2}(f)<1$,  and let $\eta$ be a non-zero complex number. Then
$$T(r,f)=T(r,f_{\eta})+S(r,f),$$
for all r outside of a possible exceptional set $E_{1}$ with finite logarithmic measure.
\end{lemma}

\begin{lemma}\label{28l}\cite{y1} Let $f(z)$  be a non-constant meromorphic function, and let $a_{j}\in \hat{S}(f)$ be $q$ distinct small functions for all $j=1,2,3$. Then
$$T(r,f)\leq \sum_{j=1}^{3}\overline{N}(r,\frac{1}{f-a_{j}})+S(r,f).$$
 \end{lemma}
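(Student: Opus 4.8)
The plan is to remove the ``small-function'' difficulty entirely by a Möbius change of variable with small-function coefficients that turns the three moving targets into the three constants $0,1,\infty$, and then to apply (and, for completeness, re-derive) the classical three-value Second Main Theorem, where no small-function subtlety is present.

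First I would split into the two possible configurations, obtained after relabelling: either $a_1,a_2,a_3$ are all finite, or one of them is $\infty$. In the finite case I would set
$$g=\frac{(f-a_1)(a_2-a_3)}{(f-a_3)(a_2-a_1)},$$
a Möbius transformation of $f$ whose coefficients are small functions of $f$, and which sends the targets to constants: $f=a_1\Leftrightarrow g=0$, $f=a_2\Leftrightarrow g=1$, and $f=a_3\Leftrightarrow g=\infty$. Because the distinctness of the $a_j$ makes this map non-degenerate, $f$ is likewise a small-coefficient Möbius transformation of $g$, so by the standard invariance of the characteristic under such transformations $T(r,g)=T(r,f)+S(r,f)$, and hence $S(r,g)=S(r,f)$. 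If instead one target, say $a_3$, is $\infty$, I would use the affine substitution $g=(f-a_1)/(a_2-a_1)$, which sends $a_1,a_2,\infty$ to $0,1,\infty$ and enjoys the same characteristic invariance.

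With the targets now constant I would apply the classical Second Main Theorem to $g$ at $0,1,\infty$, that is, $T(r,g)\le\overline{N}(r,g)+\overline{N}(r,1/g)+\overline{N}(r,1/(g-1))+S(r,g)$. To keep the argument self-contained I would re-derive this from the lemma on the logarithmic derivative: with the auxiliary function $F=\frac1g+\frac1{g-1}$ one has $m(r,g'F)\le m(r,g'/g)+m(r,g'/(g-1))+O(1)=S(r,g)$, whence $m(r,F)\le m(r,1/g')+S(r,g)$. Since on each circle the proximity to the two finite points is dominated by $\log^+|F|$, one gets $m(r,1/g)+m(r,1/(g-1))\le m(r,F)+O(1)$; adding $m(r,g)$, using $m(r,1/g')\le T(r,g')-N(r,1/g')+O(1)$ together with $T(r,g')\le T(r,g)+\overline{N}(r,g)+S(r,g)$, and then invoking the first fundamental theorem, converts the three proximity terms into the three counting functions, the ramification term $N(r,1/g')$ collapsing each $N$ to its truncated $\overline{N}$.

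Finally I would transfer the counting functions back through the substitution: $\overline{N}(r,1/g)=\overline{N}(r,1/(f-a_1))+S(r,f)$, $\overline{N}(r,1/(g-1))=\overline{N}(r,1/(f-a_2))+S(r,f)$, and $\overline{N}(r,g)=\overline{N}(r,1/(f-a_3))+S(r,f)$, since the relevant preimages differ only on the zeros and poles of the finitely many small functions $a_1,a_2,a_3,a_2-a_1,a_2-a_3$ and at the image of $f=\infty$, where $g$ takes the moving value $(a_2-a_3)/(a_2-a_1)\ne0,1,\infty$; all such sets contribute only $S(r,f)$. Combining with $T(r,g)=T(r,f)+S(r,f)$ then yields the stated inequality. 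The main obstacle I anticipate is precisely this last bookkeeping: checking that no exceptional set—overlaps of the poles of $f$ with poles or zeros of the $a_j$, or incidental coincidences among the targets—inflates any counting function beyond an $S(r,f)$ error, so that the clean constant-target estimate transfers verbatim to the small-function setting.
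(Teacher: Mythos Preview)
The paper does not supply its own proof of this lemma; it is simply quoted from \cite{y1} with a citation and no argument. Your proposal is the standard and correct proof in the three-target case: the cross-ratio $g=\dfrac{(f-a_1)(a_2-a_3)}{(f-a_3)(a_2-a_1)}$ (or its affine variant when one target is $\infty$) has small-function coefficients, so $T(r,g)=T(r,f)+S(r,f)$; the identity $g-1=\dfrac{(a_1-a_3)(f-a_2)}{(f-a_3)(a_2-a_1)}$ shows that the preimages of $0,1,\infty$ under $g$ coincide with the $a_j$-points of $f$ up to the zeros and poles of the finitely many small functions $a_j$, $a_i-a_j$, all of which contribute only $S(r,f)$; and poles of $f$ are sent to the moving value $(a_2-a_3)/(a_2-a_1)\not\equiv 0,1,\infty$, so they do not inflate any of the three truncated counting functions. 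The classical three-value Second Main Theorem then transfers verbatim. The bookkeeping you worry about is genuinely routine here precisely because there are only three targets; this Möbius trick is special to $q=3$ and is why the lemma needs no appeal to the deeper small-function second main theorem of Yamanoi.
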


\begin{lemma}\label{24l}\cite{hxh1} Let $f(z)$  be a  non-constant meromorphic function, and let $a(z)\not\equiv\infty, b(z)\not\equiv\infty$ be two distinct small meromorphic functions of $f(z)$. Suppose
\[L(f)=\left|\begin{array}{rrrr}a-b& &f-a \\
a'-b'& &f'-a'\end{array}\right|\]
then $L(f)\not\equiv0$.
\end{lemma}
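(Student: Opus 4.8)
The plan is to recognize $L(f)$ as a Wronskian and then exploit the fact that $a$ is a small function of $f$, so $f$ cannot coincide with an affine combination of $a$ and $b$. Expanding the determinant and using $(f-a)'=f'-a'$ and $(a-b)'=a'-b'$, one sees at once that
\[
L(f)=(a-b)(f'-a')-(a'-b')(f-a)=(a-b)(f-a)'-(a-b)'(f-a)=W(a-b,\,f-a),
\]
the Wronskian of the two functions $a-b$ and $f-a$.

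Now suppose toward a contradiction that $L(f)\equiv 0$. Since $a$ and $b$ are distinct, $a-b\not\equiv 0$, so in the field of meromorphic functions the identity $L(f)\equiv 0$ is the statement that the logarithmic derivative of $\dfrac{f-a}{a-b}$ vanishes identically, i.e. $\dfrac{(f-a)'}{f-a}=\dfrac{(a-b)'}{a-b}$ whenever $f\not\equiv a$. Hence $f-a=C(a-b)$ for some constant $C\in\mathbb{C}$, and the case $C=0$ is exactly the previously set-aside possibility $f\equiv a$, so no generality is lost. Then $f=(1+C)a-Cb$, and since $a,b\in S(f)$ this forces $T(r,f)\le T(r,a)+T(r,b)+O(1)=S(r,f)$, which is impossible because $f$ is non-constant and hence $T(r,f)\to\infty$. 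This contradiction gives $L(f)\not\equiv 0$.

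I expect the only point needing any care is the passage from $L(f)\equiv 0$ to $f-a=C(a-b)$: one must dispose cleanly of the degenerate subcase $f\equiv a$ (absorbed into $C=0$) and recall the elementary fact that a meromorphic function with identically vanishing logarithmic derivative is a nonzero constant. The determinant expansion and the closing one-line Nevanlinna estimate are entirely routine, so the lemma is essentially immediate once the Wronskian structure is noticed.
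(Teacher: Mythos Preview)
Your proof is correct and follows essentially the same route as the paper: assume $L(f)\equiv 0$, pass from the vanishing of the determinant to $\dfrac{(f-a)'}{f-a}=\dfrac{(a-b)'}{a-b}$, integrate to get $f-a=C(a-b)$, and conclude $T(r,f)=S(r,f)$, a contradiction. Your version is slightly more explicit in identifying $L(f)$ as the Wronskian $W(a-b,f-a)$ and in disposing of the degenerate case $f\equiv a$, but the argument is the same.
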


\begin{lemma}\label{25l}\cite{hxh1}  Let $f(z)$  be a non-constant meromorphic function, and let $a(z)\not\equiv\infty, b(z)\not\equiv\infty$ be two distinct small meromorphic functions of $f(z)$. Then
$$m(r,\frac{L(f)}{f-a})=S(r,f), \quad m(r,\frac{L(f)}{f-b})=S(r,f).$$
And
$$m(r,\frac{L(f)f}{(f-a)(f-b)})=S(r,f),$$
where $L(f)$ is defined as in Lemma 2.4.
\end{lemma}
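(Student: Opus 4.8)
The plan is to express $L(f)$ in terms of the logarithmic derivatives of $f-a$ and $f-b$ and then invoke the lemma on the logarithmic derivative, together with the fact that $a$, $b$ and their derivatives $a'$, $b'$ are small functions of $f$. Expanding the determinant gives
$$L(f)=(a-b)(f'-a')-(a'-b')(f-a).$$
Since $f'-a'=(f-a)'$, dividing by $f-a$ yields
$$\frac{L(f)}{f-a}=(a-b)\frac{(f-a)'}{f-a}-(a'-b').$$
Now $m(r,(f-a)'/(f-a))=S(r,f-a)=S(r,f)$ by the logarithmic derivative lemma, while $m(r,a-b)=S(r,f)$ and $m(r,a'-b')=S(r,f)$ because $a,b$, and hence $a',b'$, are small with respect to $f$; adding these three estimates gives $m(r,L(f)/(f-a))=S(r,f)$.

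For the second identity I would rewrite $L(f)$ symmetrically in $f-b$: substituting $f-a=(f-b)-(a-b)$ and $f'-a'=(f-b)'+(b'-a')$ into the expansion above, the terms not involving $f$ cancel and one obtains
$$L(f)=(a-b)(f-b)'-(a'-b')(f-b),$$
so that $L(f)/(f-b)=(a-b)(f-b)'/(f-b)-(a'-b')$, and exactly the same three-term estimate yields $m(r,L(f)/(f-b))=S(r,f)$.

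For the last estimate I would use the partial fraction decomposition
$$\frac{f}{(f-a)(f-b)}=\frac{1}{f-b}+\frac{a}{a-b}\left(\frac{1}{f-a}-\frac{1}{f-b}\right),$$
multiply through by $L(f)$, and apply subadditivity of the proximity function together with the two estimates just obtained and the smallness of $a/(a-b)$ to conclude $m(r,L(f)f/((f-a)(f-b)))=S(r,f)$. The argument is essentially routine; there is no serious obstacle. The only points requiring a little care are that the quantities $a-b$, $a'-b'$, $a/(a-b)$ are indeed small functions of $f$ (which follows since $a,b\in S(f)$ and differentiation and the standard arithmetic of $S(f)$ preserve smallness), and that the finitely many exceptional sets arising from the logarithmic derivative lemma combine into a single set of finite logarithmic measure. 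Note that, unlike Lemmas \ref{21l}--\ref{23l}, this lemma does not require the hypothesis $\rho_{2}(f)<1$.
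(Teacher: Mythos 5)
Your argument is correct and is essentially the paper's own proof: expand the determinant to write $L(f)/(f-a)$ as $(a-b)\,(f-a)'/(f-a)-(a'-b')$, apply the lemma on the logarithmic derivative together with the smallness of $a,b,a',b'$, and then handle the third quantity by the partial-fraction decomposition $\frac{f}{(f-a)(f-b)}=\frac{C_1}{f-a}+\frac{C_2}{f-b}$ with small coefficients. The symmetric rewriting $L(f)=(a-b)(f-b)'-(a'-b')(f-b)$ and the remark about combining exceptional sets are fine and match what the paper leaves implicit.
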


\begin{lemma}\label{27l}\cite{y1} Let $f$ and $g$ be two non-constant meromorphic functions. If $f$ and $g$ share $0,1,\infty$ CM, then
$$N_{(2}(r,f)+N_{(2}(r,\frac{1}{f})+N_{(2}(r,\frac{1}{f-1})=S(r,f).$$
\end{lemma}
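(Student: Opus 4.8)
The plan is to exploit the fact that CM sharing of $0,1,\infty$ forces three particular differences of logarithmic derivatives to be \emph{entire} small functions, and that each of these small functions must vanish at the multiple points associated to one of the three values. Write $g$ for the second function. We may assume $f\not\equiv g$, since the content of the statement is empty otherwise. Since $f$ and $g$ share $0,1,\infty$ CM, introduce
\[
\alpha=\frac{f'}{f}-\frac{g'}{g},\qquad \beta=\frac{f'}{f-1}-\frac{g'}{g-1},\qquad \gamma=\frac{f'}{f(f-1)}-\frac{g'}{g(g-1)},
\]
and observe that $\gamma=\beta-\alpha$.

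The first step is to show that $\alpha,\beta,\gamma$ are small functions of $f$. Each of $f'/f$, $g'/g$, $f'/(f-1)$, $g'/(g-1)$ has only simple poles, located at the zeros, $1$-points, and poles of the relevant function, with residue equal to the corresponding multiplicity. Because the sharing is CM, these locations \emph{and} their multiplicities coincide for $f$ and $g$, so in each of $\alpha,\beta,\gamma$ the principal parts cancel identically and the function is entire. By the standard lemma on the logarithmic derivative, $m(r,\alpha),\,m(r,\beta),\,m(r,\gamma)=S(r,f)+S(r,g)$; since three shared values CM force $T(r,f)$ and $T(r,g)$ to have the same growth, we have $S(r,g)=S(r,f)$, and being entire, each satisfies $T(r,\cdot)=m(r,\cdot)=S(r,f)$.

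The second step detects multiplicities locally. At a common zero $z_0$ of $f$ and $g$ of order $k\ge2$, the values $f-1,g-1$ are finite and nonzero while $f',g'$ vanish to order $k-1$, so each term of $\beta$ vanishes to order $k-1$ and hence $\beta$ vanishes to order at least $k-1$ at $z_0$. Counting multiplicities, $z_0$ contributes $k$ to $N_{(2}(r,\tfrac1f)$ and at least $k-1$ to $N(r,\tfrac1\beta)$; as $k\le 2(k-1)$ for $k\ge2$, summation gives $N_{(2}(r,\frac{1}{f})\le 2N(r,\frac{1}{\beta})=S(r,f)$. The same mechanism at common $1$-points (where $f,g$ are finite and nonzero, so one uses $\alpha$) gives $N_{(2}(r,\frac{1}{f-1})\le 2N(r,\frac{1}{\alpha})=S(r,f)$. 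For poles, note that $1/f$ and $1/g$ again share $0,1,\infty$ CM, and the analogue of $\beta$ for the pair $1/f,\,1/g$ is precisely $\gamma$; applying the zero-estimate to $1/f$ turns the multiple poles of $f$ into the multiple zeros of $1/f$ and yields $N_{(2}(r,f)\le 2N(r,\frac{1}{\gamma})=S(r,f)$. Adding the three estimates proves the statement.

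It remains to dispose of the degenerate possibility that one of $\alpha,\beta,\gamma$ vanishes identically. Integrating, these give respectively $f=c\,g$, $f-1=c(g-1)$, or $(f-1)/f=c\,(g-1)/g$ for a constant $c$; if $c=1$ then $f\equiv g$, contrary to assumption, while if $c\ne1$ the CM sharing forces $f$ to omit the value $1$, $0$, or $\infty$ respectively, so the corresponding truncated counting function is identically $0$ and the estimate is trivial. The main obstacle I expect is the first step: verifying that the principal parts truly cancel so that $\alpha,\beta,\gamma$ are entire — this is exactly where CM (equality of multiplicities, not merely of location) is used — and then extracting smallness of the resulting entire functions from the logarithmic derivative lemma. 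Once smallness is established, the multiplicity count is a one-line local computation and the degenerate cases are routine.
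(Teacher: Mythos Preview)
The paper does not prove this lemma; it merely cites \cite{y1} (Yang--Yi). So there is no in-paper proof to compare against, and your argument must stand on its own.

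Your proof is essentially the classical one and is correct. The auxiliary functions $\alpha,\beta,\gamma$ are exactly the standard tools for this result; the cancellation of principal parts via CM sharing, the smallness via the logarithmic-derivative lemma combined with $S(r,f)=S(r,g)$ (which follows from the second main theorem applied to the three shared counting functions), and the local multiplicity count $k\le 2(k-1)$ for $k\ge2$ are all sound. The symmetry trick of passing to $1/f,1/g$ to handle multiple poles via $\gamma$ is clean. The degenerate analysis is also correct, and you should perhaps note explicitly that at most one of $\alpha,\beta,\gamma$ can vanish identically (since any two vanishing would force $f$ and $g$ to be related by two distinct M\"obius maps fixing different subsets of $\{0,1,\infty\}$, hence constant), so the remaining two estimates always survive.

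One phrasing issue: your sentence ``the content of the statement is empty otherwise'' is not accurate. If $f\equiv g$ the hypothesis is trivially satisfied but the conclusion is generally \emph{false} (take any $f$ with many multiple zeros). The lemma, as stated in the literature and as used in this paper, carries the implicit hypothesis $f\not\equiv g$; you should say that explicitly rather than call the case vacuous.
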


\begin{lemma}\label{28l}\cite{y1} Let $f$ and $g$ be two non-constant meromorphic functions. If $f$ and $g$ share $0,1,\infty$ CM, and $f$ is not a M$\ddot{o}$bius transformation of $g$,  then\\
(i) $T(r,f)=N(r,\frac{1}{g'})+N_{0}(r)+S(r,f), T(r,g)=N(r,\frac{1}{f'})+N_{0}(r)+S(r,f)$, where $N_{0}(r)$ denotes the zeros of $f-g$, but not the zeros of $f$, $f-1$, and $\frac{1}{f}$.\\
(ii) $T(r,f)+T(r,g)=N(r,f)+N(r,\frac{1}{f})+N(r,\frac{1}{f-1})+N_{0}(r)+S(r,f)$;\\
(iii) $T(r,f)=N(r,\frac{1}{f-a})+S(r,f),$ where $a\neq0,1,\infty$.
\end{lemma}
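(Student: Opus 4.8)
The plan is to set $g=\Delta_\eta f=f(z+\eta)-f(z)$, reduce the three shared small functions to the values $0,1,\infty$, feed the resulting pair into the classical theory of meromorphic functions sharing three values CM (Lemma \ref{27l} and the lemma on the growth identities for functions sharing $0,1,\infty$ CM quoted above), and then exploit the difference structure of $g$ to eliminate every possibility except $f\equiv g$. First I would record the size relations: by Lemma \ref{21l}, $m(r,f(z+\eta)/f(z))=S(r,f)$, so $m(r,g/f)=S(r,f)$, and together with Lemma \ref{23l} this gives $T(r,g)=O(T(r,f))$ and $S(r,g)=S(r,f)$; in particular $a,b,c$ are small with respect to $g$ as well and error terms may be exchanged freely between $f$ and $g$. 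Then, applying a Möbius transformation with small-function coefficients that sends $a,b,c$ to $0,1,\infty$ (for instance $F=\frac{(f-a)(b-c)}{(f-c)(b-a)}$ and the analogous $G$ built from $g$, with the obvious degenerate form when one of $a,b,c$ is $\infty$), the CM sharing of $a,b,c$ by $f,g$ becomes CM-almost sharing of $0,1,\infty$ by $F,G$, while $T(r,F)=T(r,f)+S(r,f)$ and $T(r,G)=T(r,g)+S(r,f)$. Since $f\equiv g$ is equivalent to $F\equiv G$, I assume for contradiction that $F\not\equiv G$.

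Now I split according to whether $F$ is a Möbius transformation of $G$. In the Möbius case the (almost) CM sharing of $0,1,\infty$ forces the transformation to fix each of $0$, $1$ and $\infty$ individually: any nonidentity element of the symmetric group on $\{0,1,\infty\}$ swaps two of these, hence would identify the zeros of one function with the poles or the one-points of the other, which is incompatible with the shared divisors. Thus the transformation is the identity and $F\equiv G$, contradicting $F\not\equiv G$; the delicate point is to control the ``almost'' corrections coming from the zeros and poles of $a,b,c$ so that this identity is forced exactly. In the non-Möbius case I would invoke the quoted three-CM theorems: the vanishing of the truncated counting terms $N_{(2}$ (Lemma \ref{27l}), together with the growth identities $T(r,F)=N(r,1/G')+N_0(r)+S(r,f)$, its symmetric counterpart for $G$, and $T(r,F)=N(r,1/(F-d))+S(r,f)$ for $d\neq 0,1,\infty$. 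These are then confronted with the difference information: the shared poles satisfy $N(r,f)=N(r,g)=N(r,\Delta_\eta f)$ and Lemma \ref{21l} controls the proximity of $g$ by that of $f$, so the right-hand sides become over-determined; feeding the outcome into the second main theorem for three small functions quoted above should force $T(r,f)=S(r,f)$, the desired contradiction.

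The proximity estimates at the shared functions are supplied by the Wronskian-type function $L(f)=(a-b)(f'-a')-(a'-b')(f-a)$ of Lemma \ref{24l}: since $L(f)\not\equiv0$ and $m(r,L(f)/(f-a))=m(r,L(f)/(f-b))=S(r,f)$ by Lemma \ref{25l}, one controls $m(r,1/(f-a))+m(r,1/(f-b))$ and detects the multiple $a$- and $b$-points through the zeros of $L(f)$, which is exactly what is needed to turn the truncated-counting input of Lemma \ref{27l} into a usable bound.

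I expect the main obstacle to be the non-Möbius case: making the growth identities from the three-CM theory interact with the difference relation $f(z+\eta)=f(z)+g(z)$ tightly enough to produce a contradiction, and, secondarily, handling the small-function ``almost CM'' corrections uniformly throughout. Finally, I note that the hypothesis $\rho_2(f)<1$ enters only through Lemmas \ref{21l} and \ref{23l}; once these are available the argument is exactly that of the finite-order case (Theorem E), so the improvement from finite order to $\rho_2(f)<1$ is immediate.
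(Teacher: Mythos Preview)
Your proposal does not prove the statement you were asked to prove. The lemma in question is a \emph{general} structural result from Yang--Yi's book: for any pair of non-constant meromorphic functions $f,g$ sharing $0,1,\infty$ CM with $f$ not a M\"obius transform of $g$, one has the identities (i)--(iii) relating $T(r,f)$, $T(r,g)$, the counting functions $N(r,f)$, $N(r,1/f)$, $N(r,1/(f-1))$, and the ``extra'' zero term $N_0(r)$. There is no difference operator, no $\eta$, no small functions $a,b,c$, and no hypothesis $\rho_2(f)<1$ in this lemma; it is a classical three-value-sharing statement proved by manipulating the auxiliary functions $f'/(f(f-1))$, $g'/(g(g-1))$ and $(f-g)/(f(f-1))$, and is simply quoted here as a tool (the paper gives no proof and cites \cite{y1}).

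What you have written is instead a sketch of the proof of \emph{Theorem~1} of the paper (the main result $f\equiv\Delta_\eta f$), in which the lemma above is one of several ingredients. You have confused the target statement with the theorem that uses it. If the task is really to supply a proof of the lemma, you must drop all the difference-operator apparatus and prove (i)--(iii) directly for an arbitrary pair $f,g$ sharing $0,1,\infty$ CM; the standard route (see Yang--Yi) goes through the second main theorem applied to $f$ and $g$, the observation that $f-g$ vanishes at every shared $0$-, $1$-, and $\infty$-point, and a comparison of $T(r,f)$ with $N(r,1/(f-g))$ via the auxiliary quotient $\dfrac{f-g}{f(f-1)}$.
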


\begin{lemma}\label{2010}\cite{y1} Let $f$ and $g$ be two non-constant meromorphic functions. If $f$ and $g$ share $0,1,\infty$ CM, and
$$N(r,f)\neq T(r,f)+S(r,f),$$
$$N(r,\frac{1}{f-a})\neq T(r,f)+S(r,f),$$
where $a\neq0,1,\infty$. Then $a,\infty$ are the Picard exceptional values of $f$, and $1-a,\infty $ are the Picard exceptional values of $g$.
\end{lemma}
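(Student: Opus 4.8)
The plan is to use the dichotomy contained in Lemma 2.8 to turn the two strict inequalities in the hypothesis into structural information, and then to run the classical analysis of three CM shared values in order to locate the Picard exceptional values explicitly.

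The first step is to prove that $f$ is a M\"obius transformation of $g$. If it were not, Lemma 2.8(iii) would give $T(r,f)=N(r,\frac{1}{f-a})+S(r,f)$ for every $a\neq 0,1,\infty$, that is $N(r,\frac{1}{f-a})=T(r,f)+S(r,f)$, contradicting the second hypothesis. Hence $f=L(g)$ for some M\"obius transformation $L$, which we may take to be non-identity (the case $f\equiv g$ being outside the intended scope).

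Next I would extract the rigid ``exponential'' structure. Since $f$ and $g$ share $0,\infty$ CM and $1,\infty$ CM, the quotients $f/g$ and $(f-1)/(g-1)$ have no zeros and no poles, so $f=ge^{\alpha}$ and $f-1=(g-1)e^{\beta}$ for some entire $\alpha,\beta$. Substituting $f=\frac{Ag+B}{Cg+D}$ into $f=ge^{\alpha}$ expresses $e^{\alpha}$ as a fixed rational function $R(g)$ of $g$; because $e^{\alpha}$ omits $0$ and $\infty$, the function $g$ must omit every zero and every pole of $R$. A short computation then shows that $g$ omits exactly two values, that $f=M(e^{h})$ for a M\"obius $M$ and an entire $h$, and hence that every value $v$ of $f$ is either a Picard exceptional value or is taken with full frequency $N(r,\frac{1}{f-v})=T(r,f)+S(r,f)$; the same dichotomy holds for $g$.

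Finally I would feed in the two hypotheses. Since $N(r,f)\neq T(r,f)+S(r,f)$ and $N(r,\frac{1}{f-a})\neq T(r,f)+S(r,f)$, neither $\infty$ nor $a$ is taken with full frequency, so the dichotomy forces both to be Picard exceptional values of $f$, and as $f$ omits at most two values these are exactly its exceptional values. In particular $0$ and $1$ are taken by $f$, hence by $g$ as well, so $L$ fixes $0$ and $1$; this pins down $L(w)=\frac{w}{cw+1-c}$ with $a=L(\infty)=\frac{1}{c}$, and transporting the exceptional values of $f$ through $L^{-1}$ gives that $g$ omits $L^{-1}(a)=\infty$ and $L^{-1}(\infty)=1-a$, which is the claim. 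The main obstacle is the middle step: deriving the exponential normal form rigorously from the M\"obius relation together with the CM sharing, and carrying out the bookkeeping that rules out the competing configurations. Here both hypotheses are genuinely needed --- the condition on $N(r,f)$ eliminates the cases in which $L$ fixes $\infty$ (so that $\infty$ would be taken fully), while the condition on $N(r,\frac{1}{f-a})$ eliminates the cases in which $L$ fixes only one of $0,1,\infty$ (so that the value $a\neq 0,1,\infty$ would be taken fully).
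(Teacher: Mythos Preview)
The paper does not prove this lemma; it is quoted from \cite{y1} without argument, so there is no in-paper proof to compare against directly.

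Your outline is correct at both ends. The reduction to the M\"obius case via the contrapositive of the ``non-M\"obius'' lemma (Lemma~2.7(iii) in the paper's numbering) is right, and the endgame --- using the two hypotheses together with the CM sharing of $0,1,\infty$ to force $L(0)=0$, $L(1)=1$, $L(\infty)=a$, and then reading off $L^{-1}(\{a,\infty\})=\{\infty,1-a\}$ --- is clean and correct. The acknowledged gap is the middle step, where you want to pass from $f=L(g)$ to the dichotomy ``every value of $f$ is either Picard-exceptional or taken with full frequency $T(r,f)+S(r,f)$''. Your proposed route through $e^{\alpha}=R(g)$ is workable in principle, but the computation is not as short as you suggest: $R(w)=(Aw+B)/\bigl(w(Cw+D)\bigr)$ generically has two finite poles, one finite zero, and vanishes at $w=\infty$, and reconciling this with $e^{\alpha}$ omitting $0,\infty$ while simultaneously handling $e^{\beta}=(L(g)-1)/(g-1)$ is a genuine case split that you have not carried out.

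A faster and more transparent way to close the gap --- and essentially the argument in \cite{y1} --- is to invoke the six-case classification recorded here as Lemma~2.10: once $f$ is a nontrivial M\"obius transform of $g$ sharing $0,1,\infty$ CM, one of relations (i)--(vi) holds, and in each of them $f$ is manifestly of the form $M(e^{h})$, so the dichotomy is immediate. One then checks that cases (i)--(iii) place both Picard values of $f$ inside $\{0,1,\infty\}$, whence $N(r,\tfrac{1}{f-a})=T(r,f)+S(r,f)$ for any $a\neq 0,1,\infty$, contradicting the second hypothesis; cases (iv)--(v) have $L$ fixing $\infty$, whence $N(r,f)=T(r,f)+S(r,f)$, contradicting the first; only case (vi) survives, and there one reads off directly that $f$ omits $\infty$ and $\tfrac{1}{1-c}$ (forcing $a=\tfrac{1}{1-c}$) while $g$ omits $\infty$ and $\tfrac{c}{c-1}=1-a$. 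This replaces your ad hoc structure computation with a single appeal to a lemma already available in the paper.
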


\begin{lemma}\label{2011} \cite{a} Let $f$ and $g$ be two non-constant meromorphic functions. If $f$ and $g$ share $0,1,\infty$ CM, and $f$ is  a M$\ddot{o}$bius transformation of $g$,  then $f$ and $g$ assume one of the following six relations: (i) $fg=1$; (ii) $(f-1)(g-1)=1$; (iii) $f+g=1$; (iv) $f=cg$; (v) $f-1=c(g-1)$; (vi) $[(c-1)f+1][(c-1)g-c]=-c$, where $c\neq0,1$ is a complex number.
\end{lemma}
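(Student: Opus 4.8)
The plan is to exploit that $f$ being a Möbius transformation of $g$ means $f = T(g)$ for some Möbius map $T$, and then to read off the constraints that CM-sharing of $0,1,\infty$ imposes on $T$ through the three preimages $T^{-1}(0)$, $T^{-1}(1)$, $T^{-1}(\infty)$. The observation driving everything is that for each $a \in \{0,1,\infty\}$ one has $f = a$ precisely where $g = T^{-1}(a)$, with matching multiplicities since $T$ is locally biholomorphic. Because $f$ and $g$ share $a$ CM, the $a$-points of $f$ coincide with the $a$-points of $g$; hence the level set $\{g = T^{-1}(a)\}$ equals the level set $\{g = a\}$. Two level sets of $g$ over distinct target values are disjoint, so this equality forces a dichotomy for each $a$: either $T^{-1}(a) = a$ (i.e.\ $T$ fixes $a$), or $g$ omits both $a$ and $T^{-1}(a)$.

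Next I would let $m$ denote the number of values in $\{0,1,\infty\}$ fixed by $T$ and split according to $m$, using Picard's theorem (a non-constant meromorphic function omits at most two values) together with the injectivity of $T$. If $m = 0$, every value of $\{0,1,\infty\}$ is moved, so $g$ omits all of $0,1,\infty$, which is impossible. If $m = 3$, then $T$ fixes three points and is the identity, giving the degenerate relation $f \equiv g$ (the case one excludes or records separately, since it is not among the six). The substance is in $m = 1$ and $m = 2$.

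For $m = 1$, say $T$ fixes $a_3$ and moves the two remaining values $a_1, a_2$. Then $g$ omits $a_1, a_2, T^{-1}(a_1), T^{-1}(a_2)$; since at most two distinct values can be omitted and $T^{-1}$ is injective with $T^{-1}(a_i) \ne a_i$, I would conclude $T^{-1}(a_1) = a_2$ and $T^{-1}(a_2) = a_1$, so that $T$ is the unique involution swapping $a_1, a_2$ and fixing $a_3$. The three choices of fixed value then yield $T(w) = 1/w$, $T(w) = w/(w-1)$, and $T(w) = 1 - w$, which are exactly relations (i) $fg = 1$, (ii) $(f-1)(g-1) = 1$, and (iii) $f + g = 1$. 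For $m = 2$, $T$ fixes two of the values and moves the third; here $T$ is pinned down only up to a single free parameter $c \ne 0,1$ (its multiplier at the fixed pair), and writing $T$ explicitly for each of the three choices of fixed pair gives $f = cg$, $f - 1 = c(g-1)$, and the relation (vi), namely (iv), (v), (vi) respectively.

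The main obstacle — really the only delicate point — is establishing the dichotomy of the first paragraph rigorously, i.e.\ verifying that CM-sharing of a value $a$ by $f$ and $g$ translates into exact equality (as divisors, hence in particular as sets) of $\{g = T^{-1}(a)\}$ and $\{g = a\}$, which requires that the Möbius substitution preserve both the location and the multiplicity of $a$-points. Once that is in hand, the remainder is a finite case check driven by Picard's theorem and the uniqueness of a Möbius map prescribed on three points; matching each surviving configuration of $T$ to the stated algebraic identity is routine. I would also take care to treat the point $\infty$ on equal footing with $0,1$ throughout (interpreting ``$g$ omits $\infty$'' as $g$ having no poles), so that the three symmetric subcases in each of $m = 1$ and $m = 2$ are handled uniformly.
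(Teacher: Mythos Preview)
The paper does not supply its own proof of this lemma: it is quoted from \cite{a} and used as a black box. So there is no in-paper argument to compare against. That said, your sketch is the standard and correct route. The dichotomy you isolate---for each $a\in\{0,1,\infty\}$ either $T(a)=a$ or $g$ omits both $a$ and $T^{-1}(a)$---is exactly the mechanism, and it follows cleanly from the observation that CM sharing forces the divisors $\{g=a\}$ and $\{g=T^{-1}(a)\}$ to coincide, while distinct target values give disjoint level sets. Your handling of multiplicities via the local biholomorphicity of $T$ (including at $\infty$) is the right justification. The case split on $m=\#\{a:T(a)=a\}$ then goes through as you describe: $m=0$ is killed by Picard, $m=1$ forces $T$ to be one of the three involutions swapping a pair in $\{0,1,\infty\}$ and fixing the third (giving (i)--(iii)), and $m=2$ leaves a one-parameter family of $T$ for each choice of fixed pair (giving (iv)--(vi) with $c\neq 0,1$).

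One small point worth making explicit in a final write-up: the lemma as stated tacitly excludes $f\equiv g$ (your $m=3$ case), since none of the six relations with $c\neq 0,1$ accommodate it for non-constant $f$. You already flag this, but in the paper's applications the hypothesis $f\not\equiv g$ is always in force, so the omission is harmless in context.
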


\begin{lemma}\label{23l}Let $f_{1}$ and $f_{2}$ be  non-constant meromorphic functions in $|z|<\infty$, then
$$N(r,f_{1}f_{2})-N(r,\frac{1}{f_{1}f_{2}})=N(r,f_{1})+N(r,f_{2})-N(r,\frac{1}{f_{1}})-N(r,\frac{1}{f_{2}}),$$
where $0<r<\infty$.
\end{lemma}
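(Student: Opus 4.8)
The plan is to reduce this identity to the pointwise additivity of multiplicities under multiplication. I would begin by recalling the summation form of the integrated counting function: for a meromorphic function $g$ and $0<r<\infty$, if $b_1,b_2,\dots$ denote the poles of $g$ in $0<|z|\le r$ repeated according to multiplicity, then
$$N(r,g)=\sum_{0<|b_k|\le r}\log\frac{r}{|b_k|}+n(0,g)\log r,$$
and $N(r,1/g)$ is the analogous weighted count over the zeros of $g$. Since $f_{1}$ and $f_{2}$ are non-constant meromorphic functions, each has only finitely many zeros and poles in the closed disk $|z|\le r$, so every sum appearing below is finite.

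Next I would introduce, for each point $z_0$, the order $\operatorname{ord}_{z_0}(g)\in\mathbb{Z}$, defined so that $\operatorname{ord}_{z_0}(g)=m$ if $g$ has a zero of multiplicity $m$ at $z_0$ and $\operatorname{ord}_{z_0}(g)=-p$ if $g$ has a pole of multiplicity $p$ there (and $0$ otherwise). With this convention the two counting functions combine into a single signed sum,
$$N(r,g)-N\!\left(r,\frac{1}{g}\right)=-\sum_{0<|z_0|\le r}\operatorname{ord}_{z_0}(g)\,\log\frac{r}{|z_0|}-\operatorname{ord}_0(g)\,\log r,$$
because at each $z_0$ a pole (where $\operatorname{ord}<0$) contributes $-\operatorname{ord}_{z_0}(g)\log(r/|z_0|)$ to $N(r,g)$, a zero (where $\operatorname{ord}>0$) contributes the same quantity to $N(r,1/g)$ but with the opposite sign to the difference, and the origin is accounted for by the term $\operatorname{ord}_0(g)\log r$.

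The crux is the elementary local identity $\operatorname{ord}_{z_0}(f_1f_2)=\operatorname{ord}_{z_0}(f_1)+\operatorname{ord}_{z_0}(f_2)$, valid at every $z_0$: writing the local factorizations $f_i(z)=(z-z_0)^{m_i}h_i(z)$ with $h_i(z_0)\neq0,\infty$ gives $f_1f_2=(z-z_0)^{m_1+m_2}h_1h_2$, where $h_1h_2$ is again finite and non-zero at $z_0$. Substituting $g=f_1f_2$ into the signed-sum formula and using linearity of the (finite) sum then splits the right-hand side into the sum of the corresponding expressions for $f_1$ and $f_2$, which is exactly the claimed identity.

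I do not expect any genuine obstacle, as the statement is a bookkeeping identity. The only points demanding care are the treatment of the origin (the term $\operatorname{ord}_0(g)\log r$), which is handled by the same additivity of orders, and the cancellation occurring when $f_1$ and $f_2$ have a zero and a pole at the same point — but since the order is additive irrespective of its sign, such cancellations are automatically respected. As an alternative route one could instead invoke Jensen's formula, writing $N(r,g)-N(r,1/g)=\log|g(0)|-\frac{1}{2\pi}\int_0^{2\pi}\log|g(re^{i\theta})|\,d\theta$ after a harmless normalization at the origin, and then using $\log|f_1f_2|=\log|f_1|+\log|f_2|$; this makes the additivity transparent at the level of the potential rather than the divisor.
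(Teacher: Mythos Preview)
Your argument is correct: the identity is precisely the additivity of the local order $\operatorname{ord}_{z_0}$ under multiplication, transported to the integrated counting functions, and your handling of the origin term and of zero/pole cancellations is accurate. One small slip in the aside: in Jensen's formula the sign is the other way, namely
\[
N(r,g)-N\!\left(r,\tfrac{1}{g}\right)=\frac{1}{2\pi}\int_0^{2\pi}\log|g(re^{i\theta})|\,d\theta-\log|g(0)|
\]
(under the usual normalization $g(0)\neq0,\infty$); this does not affect the conclusion, since the additivity of $\log|f_1f_2|$ works regardless of the global sign.

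As for comparison with the paper: the paper gives no proof of this lemma at all. It is stated as a known elementary identity (it is the standard consequence of Jensen's formula, or equivalently of the Poisson--Jensen formula, found in the textbooks cited there), and is only used later as a tool. So there is nothing to compare your route against; your write-up simply supplies what the paper omits.
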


\begin{lemma}\label{291}\cite{y1}
Suppose $f_{1}, f_{2},\cdots, f_{n}(n\neq2)$   are meromorphic functions and $g_{1}, g_{2},\cdots, g_{n}$ are entire functions such that\\
(i) $\sum_{j=1}^{n}f_{j}e^{g_{j}}=0$,\\
(ii) $g_{j}-g_{k}$ are not constants for $1\leq j<k\leq n$,\\
(iii) For $1\leq j\leq n$ and $1\leq h<k\leq n$,
$$T(r,f_{j})=S(r,e^{g_{j}-g_{k}})(r\rightarrow\infty, r\not\in E_{2}).$$
Then $f_{j}\equiv0$ for all $1\leq j\leq n$.
\end{lemma}

\begin{lemma}\label{2010}\cite{g}
 Let $f$, $F$ and $g$ be three non-constant meromorphic functions, where $g=F(f)$. Then $f$ and $g$ share three values IM if and only if there exist an entire function $h$ such that,
 by a  suitable linear fractional transformation, one of the following cases holds: \\
 (i) $f\equiv g$;\\
 (ii) $f=e^{h}$ and $g=a(1+4ae^{-h}-4a^{2}e^{-2h})$ have three IM shared values $a\neq0$, $b=2a$ and $\infty$;\\
 (iii) $f=e^{h}$ and $g=\frac{1}{2}(e^{h}+a^{2}e^{-h})$ have three IM shared values $a\neq0$, $b=-a$ and $\infty$;\\
 (iv) $f=e^{h}$ and $g=a+b-abe^{-h}$ have three IM shared values $ab\neq0$ and $\infty$;\\
 (v) $f=e^{h}$ and $g=\frac{1}{b}e^{2h}-2e^{h}+2b$ have three IM shared values $b\neq0$, $a=2b$ and $\infty$;\\
 (vi) $f=e^{h}$ and $g=b^{2}e^{-h}$ have three IM shared values $a\neq0$, $0$ and $\infty$.
 \end{lemma}

\begin{lemma}\label{2010}\cite{ly4}[Lemma 7]
Let $f$ and $g$ be two non-constant meromorphic functions satisfying
$$\overline{N}(r,f)+\overline{N}(r,g)+\overline{N}(r,\frac{1}{f})+\overline{N}(r,\frac{1}{f})=S(r,f).$$
If $f^{s}g^{t}\equiv1$ for all integers $s$ and $t$($|s|+|t|>0$), then for any positive number $\varepsilon$, we have
$$N_{0}(r,1;f;g)\leq\varepsilon (T(r,f)+T(r,g))+S(r),$$
where  $N_{0}(r,1;f;g)$ denotes the reduced counting function of $f$ and $g$ related to the common $1$-points and $S(r)=o(T(r,f)+T(r,g))$ as $r\rightarrow\infty, r\not\in E_{3}$
 \end{lemma}

\

{\bf Remark 2} It is a tedious but "repeated" work in the proof of Lemma 2.8 that as for a small function $a\not\equiv0,1,\infty$, one can verify  $$2T(r,f)\leq3N(r,\frac{1}{f-a})+S(r,f),$$
holds when one of $N(r,\frac{1}{f})=S(r,f)$, $N(r,\frac{1}{f-1})=S(r,f)$ and $N(r,f)=S(r,f)$ occurs,
and
$$T(r,f)\leq2N(r,\frac{1}{f-a})+S(r,f),$$
 holds when neither $N(r,\frac{1}{f})=S(r,f)$ nor $N(r,\frac{1}{f-1})=S(r,f)$ nor $N(r,f)=S(r,f)$ occurs.

\

{\bf Remark 3} Denote $E$ by $E_{1}\bigcup E_{2}\bigcup E_{3}$, where $E_{1}$, $E_{2}$ and $E_{3}$ are defined as in Lemma 2.1, Lemma 2.2, Lemma 2.11 and Lemma 2.12. In the following proof, by $S(r,f)$, we denote any quantity satisfying $S(r, f) = o(T(r, f))$, as $r\to \infty $ outside of a possible exceptional set $E$.

\section{The proof of Theorem 1 }
  Suppose $f\not\equiv \Delta^{n}_{\eta}f$. Without lose of generality, we discuss two cases, i.e. $c\equiv\infty$ and $c\not\equiv\infty$.

{\bf Case 1} $c\equiv\infty$. Since $f$ is a non-constant meromorphic function satisfying $\rho_{2}(f)<1$, and $f$ and $\Delta^{n}_{\eta}f$ share $a,b,\infty$ CM, we know that there are two entire functions $p$ and $q$ such that
\begin{align}
\frac{\Delta^{n}_{\eta}f-a}{f-a}=e^{p}, \quad \frac{\Delta^{n}_{\eta}f-b}{f-b}=e^{q}.
\end{align}

Set
\begin{align}
\varphi=\frac{L(f)(f-\Delta^{n}_{\eta}f)}{(f-a)(f-b)},
\end{align}
where $L(f)\not\equiv0$ is defined as in Lemma 2.4. Since $f\not\equiv \Delta^{n}_{\eta}f$, then $\varphi\not\equiv0$.

 Set $F=\frac{f-a}{b-a}$ and $G=\frac{\Delta^{n}_{\eta}f-a}{b-a}$, and thus $F$ and $G$ share $0,1,\infty$ CM, as $f$ and $\Delta^{n}_{\eta}f$ share $a,b,\infty$ CM. Then by Lemma 2.6, we have
\begin{align}
N(r,f)=N_{1}(r,f),  N(r,\frac{1}{f-a})=N_{1}(r,\frac{1}{f-a}), N(r,\frac{1}{f-b})=N_{1}(r,\frac{1}{f-b}).
\end{align}
Since $f$ is a non-constant  meromorphic function satisfying $\rho_{2}(f)<1$, by Lemma 2.8, we have
\begin{align}
T(r,f)=T(r,F)+S(r,f)=T(r,G)+S(r,f)=T(r,g)+S(r,f).
\end{align}

We claim that
\begin{align}
T(r,f)=N(r,f)+S(r,f).
\end{align}
Otherwise, by Lemma 2.8, we know $N(r,f)=S(r,f)$, and hence {\bf Remark 1} implies $f\equiv \Delta^{n}_{\eta}f$, a contradiction. We also claim that $F$ is not a M$\ddot{o}$bius transformation of $G$. Otherwise, by Lemma 2.9, if  (i) occurs, we can see that
 \begin{align}
 N(r,\frac{1}{f-a})= N(r,\frac{1}{\Delta^{n}_{\eta}f-a})=S(r,f), N(r,f)= N(r,\Delta^{n}_{\eta}f)=S(r,f).
\end{align}
Then by Theorem C and {\bf Remark 1}, we can obtain a contradiction.\\

If (ii) occurs, we can see that
 \begin{align}
 N(r,\frac{1}{f-b})= N(r,\frac{1}{\Delta^{n}_{\eta}f-b})=S(r,f), N(r,f)= N(r,\Delta^{n}_{\eta}f)=S(r,f).
\end{align}
Then by  {\bf Remark 1}, we can obtain a contradiction.

If (iii) occurs, then by the definitions of $F$ and $G$, we can get $f+\Delta^{n}_{\eta}f=a+b$, that is
 \begin{align}
 &N(r,\frac{1}{f-a})= N(r,\frac{1}{\Delta^{n}_{\eta}f-a})=S(r,f),\notag\\
 &N(r,\frac{1}{f-b})= N(r,\frac{1}{\Delta^{n}_{\eta}f-b})=S(r,f).
\end{align}
Then by Lemma 2.1 and (3.8), we have
\begin{eqnarray*}
\begin{aligned}
 2T(r,f)&=m(r,\frac{1}{f-a})+m(r,\frac{1}{f-b})+S(r,f)\notag\\
 &=m(r,\frac{\Delta^{n}_{\eta}(f-a)}{f-a})+m(r,\frac{\Delta^{n}_{\eta}(f-b)}{f-b})+m(r,\frac{1}{\Delta^{n}_{\eta}(f-a)})\notag\\
 &+m(r,\frac{1}{\Delta^{n}_{\eta}(f-b)})+S(r,f)\notag\\
& \leq m(r,\frac{1}{\Delta^{n}_{\eta}(f-a)})+m(r,\frac{1}{\Delta^{n}_{\eta}(f-b)})+S(r,f)\leq 2T(r,\Delta^{n}_{\eta}f)\notag\\
 &-N(r,\frac{1}{\Delta^{n}_{\eta}f-\Delta^{n}_{\eta}a})-N(r,\frac{1}{\Delta^{n}_{\eta}f-\Delta^{n}_{\eta}b})+S(r,f),
\end{aligned}
\end{eqnarray*}
which implies
 \begin{align}
 N(r,\frac{1}{\Delta^{n}_{\eta}(f-a)})+N(r,\frac{1}{\Delta^{n}_{\eta}(f-b)})=S(r,f).
  \end{align}
Then we can know from Lemma 2.3, (3.8) and (3.9) that $\Delta^{n}_{\eta}a\equiv a$ or $\Delta^{n}_{\eta}a\equiv b$ and $\Delta^{n}_{\eta}b\equiv a$ or $\Delta^{n}_{\eta}b\equiv b$. If one of $\Delta^{n}_{\eta}a\equiv \Delta^{n}_{\eta}b\equiv a$ and $\Delta^{n}_{\eta}a\equiv \Delta^{n}_{\eta}b\equiv b$ occurs, then we have  $a\equiv b$, a contradiction. Hence $\Delta^{n}_{\eta}a\not\equiv \Delta^{n}_{\eta}b$. If $\Delta^{n}_{\eta}a\equiv a$ and $\Delta^{n}_{\eta}b\equiv b$, we set
\begin{align}
D&=(f-a)(\Delta^{n}_{\eta}a-\Delta^{n}_{\eta}b)-(\Delta^{n}_{\eta}f-\Delta^{n}_{\eta}a)(a-b)\notag\\
&=(f-b)(\Delta^{n}_{\eta}a-\Delta^{n}_{\eta}b)-(\Delta^{n}_{\eta}f-\Delta^{n}_{\eta}b)(a-b).
\end{align}
We claim that $D\not\equiv0$. Otherwise,  by the equalities $\Delta^{n}_{\eta}a\equiv a$ and $\Delta^{n}_{\eta}b\equiv b$, and the definition of $D$, we can get $f\equiv \Delta^{n}_{\eta}f$, a contradiction. So $D\not\equiv0$. Then it is easy to see that
\begin{align}
 2T(r,f)&=m(r,\frac{1}{f-a})+m(r,\frac{1}{f-b})+S(r,f)\notag\\
 &=m(r,\frac{1}{f-a}+\frac{1}{f-b})+S(r,f)\notag\\
  &=m(r,\frac{D}{f-a}+\frac{D}{f-b})+m(r,\frac{1}{D})\notag\\
 & \leq m(r,f-\Delta^{n}_{\eta}f)+N(r,f-\Delta^{n}_{\eta}f)+S(r,f)\notag\\
 &\leq m(r,f)+N(r,f)+S(r,f)=T(r,f)+S(r,f),
 \end{align}
which implies
\begin{align}
 T(r,f)=S(r,f),
\end{align}
a contradiction. Thus we know that it must occur that $\Delta^{n}_{\eta}a\equiv b$ and $\Delta^{n}_{\eta}b\equiv a$. And then $f-a\equiv -(\Delta^{n}_{\eta}f-\Delta^{n}_{\eta}a)$. Set $F_{1}=\frac{f-a}{b-a}$ and $G_{1}=\frac{\Delta^{n}_{\eta}f-\Delta^{n}_{\eta}a}{b-a}$, then $G_{1}=-F_{1}$. By (3.8) we know that $F_{1}$ and $G_{1}$ share $0,1,\infty$ CM. According to Lemma 2.12, we can know that either $F_{1}\equiv G_{1}$, combining $G_{1}=-F_{1}$, we get $F_{1}\equiv0$, that is $T(r,f)=T(r,F_{1})+S(r,f)=S(r,f)$, a contradiction. Or $N(r,f)=N(r,F_{1})+S(r,f)=S(r,f)$, and in this case we can use Theorem C and Remark 1 to obtain a contradiction.

If (iv) occurs, that is $F=jG$, where $j\neq0, 1$ is a finite constant. Then by Lemma 2.12, we can obtain either $F\equiv G$, or $N(r,F)=S(r,f)$. But we can obtain two contradictions from both of cases.

If (v) occurs, that is $F-1=i(G-1)$, where $i\neq0, 1$ is a finite constant. Then by Lemma 2.12, we can obtain either $F\equiv G$, or $N(r,F)=S(r,f)$. But we can obtain two contradictions from both of cases.

If (vi) occurs, $[(k-1)F+1][(k-1)G-k]=-k$, where $k\neq0,1$ is a complex number. We can see that
 \begin{align}
 N(r,F)+S(r,f)=N(r,f)= N(r,g)=N(r,F)+S(r,f)=S(r,f).
\end{align}
Then by  {\bf Remark 1} and Theorem C, we can obtain a contradiction.\\

Hence, $F$ is not a M$\ddot{o}$bius transformation of $G$. If $ab\equiv0$, and without lose of generality, we set $a\equiv0$.
Easy to see from (3.2), Lemma 2.1 and Lemma 2.5 that
\begin{eqnarray*}
\begin{aligned}
T(r,\varphi)&=m(r,\frac{L(f)(f-\Delta^{n}_{\eta}f)}{(f-a)(f-b)})+N(r,\varphi)\notag\\
&\leq m(r,\frac{L(f)f}{(f-a)(f-b)})+m(r,1-\frac{\Delta^{n}_{\eta}f}{f})+N(r,\varphi)\notag\\
&\leq N_{1}(r,f)+S(r,f),
\end{aligned}
\end{eqnarray*}
that is
\begin{align}
T(r,\varphi)\leq N_{1}(r,f)+S(r,f).
\end{align}

We also obtain
\begin{align}
m(r,\frac{\varphi}{f})&\leq m(r,\frac{L(f)}{(f-a)(f-b)})+m(r,1-\frac{\Delta^{n}_{\eta}f}{f})=S(r,f).
\end{align}
Then it follows from Lemma 2.7 and (3.14)-(3.15) that
\begin{eqnarray*}
\begin{aligned}
m(r,\frac{1}{f})&\leq m(r,\frac{\varphi}{f})+m(r,\frac{1}{\varphi})\\
&\leq T(r,\varphi)-N(r,\frac{1}{\varphi})+S(r,f)\\
&\leq T(r,\varphi)-(N(r,\frac{1}{L(f)})+N_{0}(r,\frac{1}{f-\Delta^{n}_{\eta}f}))+S(r,f)\\
&\leq N_{1}(r,f)-T(r,f)+S(r,f)=S(r,f),
\end{aligned}
\end{eqnarray*}
which is
\begin{align}
m(r,\frac{1}{f})=S(r,f).
\end{align}
Here, $N_{0}(r,\frac{1}{f-\Delta^{n}_{\eta}f})=N_{0}(r,\frac{1}{F-G})+S(r,f)$, $N_{0}(r,\frac{1}{F-G})$ denotes the zeros of $F-G$, but not the zeros of $F$, $F-1$, and $\frac{1}{F}$, and $N_{0}(r,\frac{1}{f-\Delta^{n}_{\eta}f})$ denotes the zeros of $f-\Delta^{n}_{\eta}f$, but not the zeros of $f$, $f-a$, and $f-b$. So
\begin{align}
T(r,f)=N(r,\frac{1}{f})+S(r,f).
\end{align}
Combing Lemma 2.7 and (3.17), we can get
\begin{eqnarray*}
\begin{aligned}
&N(r,f)+N(r,\frac{1}{f})+N(r,\frac{1}{f-b})+N_{0}(r)\\
&=T(r,f)+T(r,\Delta^{n}_{\eta}f)+S(r,f)\\
&=N(r,\frac{1}{f})+N(r,f)+S(r,f),
\end{aligned}
\end{eqnarray*}
that is
\begin{align}
N(r,\frac{1}{f-b})+N_{0}(r)=S(r,f),
\end{align}
and therefore by (3.18), we have
\begin{align}
T(r,e^{p})&=N(r,\frac{1}{e^{p}-1})+S(r,f)\notag\\
&\leq N_{0}(r)+N(r,\frac{1}{f-b})=S(r,f)
\end{align}
and
\begin{align}
&T(r,f)=m(r,\frac{1}{f-b})+N(r,\frac{1}{f-b})+S(r,f)\notag\\
&=m(r,\frac{1}{f-b})+S(r,f)\leq m(r,\frac{1}{\Delta^{n}_{\eta}f-\Delta^{n}_{\eta}b})+S(r,f)\notag\\
&\leq T(r,\Delta^{n}_{\eta}f)-N(r,\frac{1}{\Delta^{n}_{\eta}f-\Delta^{n}_{\eta}b})+S(r,f),
\end{align}
which implies
\begin{align}
N(r,\frac{1}{\Delta^{n}_{\eta}f-\Delta^{n}_{\eta}b})=S(r,f).
\end{align}
If $\Delta^{n}_{\eta}b=0$, then (3.17) deduces $T(r,f)=S(r,f)$, a contradiction. Hence $\Delta^{n}_{\eta}b=b$. Then by (3.1) and Lemma 2.1, we have
\begin{align}
m(r,e^{q})=m(r,\frac{\Delta^{n}_{\eta}f-\Delta^{n}_{\eta}b}{f-b})=S(r,f).
\end{align}
Solving the equation (3.1), we can get
\begin{align}
f=\frac{a-b+be^{q}-ae^{p}}{e^{q}-e^{p}}, \quad \Delta^{n}_{\eta}f=\frac{b-a+ae^{-p}-be^{-q}}{e^{-p}-e^{-q}}.
\end{align}
It follows from above, (3.19) and (3.22),  we have $T(r,f)=S(r,f)$, a contradiction. Thus, we know that neither $\Delta_{\eta}^{n}b=0$ nor $\Delta_{\eta}^{n}b=b$ holds. Then by Lemma 2.1-Lemma 2.3 and (3.21), we have
\begin{align}
N(r,\frac{1}{f-b})&=N(r,\frac{1}{\Delta^{n}_{\eta}f-b})+S(r,f)\notag\\
&=N(r,\frac{1}{\Delta_{\eta}^{n}f-\Delta_{\eta}^{n}b})+S(r,f)=S(r,f),
\end{align}
and
\begin{align}
T(r,f)=N(r,\frac{1}{f})+S(r,f)=N(r,f)+S(r,f).
\end{align}

Set
$$P_{1}=\frac{\Delta^{n}_{\eta}f-b}{f-b},\quad Q_{1}=\frac{(f-b)\Delta^{n}_{\eta}b}{(\Delta^{n}_{\eta}f-\Delta^{n}_{\eta}b)b}.$$
We ca see from above that
$$N(r,P_{1})+N(r,Q_{1})+N(r,\frac{1}{P_{1}})+N(r,\frac{1}{Q_{1}})=S(r,f).$$
If for all integers $s$ and $t$ satisfying ($|s|+|t|>0$) such that $P_{1}^{s}Q_{1}^{t}\not\equiv1$, then by Lemma 2.12, we get
\begin{align}
T(r,f)&=N(r,\frac{1}{f-a})+S(r,f)\leq N_{0}(r,P_{1};1;Q_{1})+S(r,f)\notag\\
&\leq\varepsilon(T(r,P_{1})+T(r,Q_{1}))+S(r,f)\notag\\
&\leq2\varepsilon T(r,f)+S(r,f),
\end{align}
it follows from above and $\varepsilon<\frac{1}{2}$ that $T(r,f)=S(r,f)$, a contradiction. Therefore, there exist two integer $s=1$ and $t=1$ such that $P_{1}Q_{1}\equiv1$. That is
$$\frac{(\Delta^{n}_{\eta}f-b)\Delta^{n}_{\eta}b}{(\Delta^{n}_{\eta}f-\Delta^{n}_{\eta}b)b}\equiv1.$$
Rewrite above as
\begin{align}
\frac{\Delta^{n}_{\eta}b-b}{\Delta^{n}_{\eta}f-\Delta^{n}_{\eta}b}\equiv \frac{b}{\Delta^{n}_{\eta}b}-1,
\end{align}
which follows from Lemma 2.1, Lemma 2.2 and (3.21) that
\begin{align}
T(r,f)=m(r,\frac{1}{f-b})+S(r,f)\leq m(r,\frac{1}{\Delta^{n}_{\eta}f-\Delta^{n}_{\eta}b})+S(r,f)=S(r,f),
\end{align}
a contradiction.

So $ab\not\equiv0$. By Lemma 2.7 and (3.5) that
\begin{eqnarray*}
\begin{aligned}
3T(r,f)+N_{0}(r)=2T(r,f)+m(r,\frac{1}{f-a})+m(r,\frac{1}{f-b})+S(r,f),
\end{aligned}
\end{eqnarray*}
which follows from above inequality that
\begin{align}
&T(r,f)+N_{0}(r)=m(r,\frac{1}{f-a})+m(r,\frac{1}{f-b})+S(r,f)\notag\\
&\leq m(r,\frac{\Delta^{n}_{\eta}f-\Delta^{n}_{\eta}a}{f-a})+m(r,\frac{\Delta^{n}_{\eta}f-\Delta^{n}_{\eta}b}{f-b})
+m(r,\frac{1}{\Delta^{n}_{\eta}f-\Delta^{n}_{\eta}a})+m(r,\frac{1}{\Delta^{n}_{\eta}f-\Delta^{n}_{\eta}b})\notag\\
&+S(r,f)\leq m(r,\frac{1}{\Delta^{n}_{\eta}f-\Delta^{n}_{\eta}a})+m(r,\frac{1}{\Delta^{n}_{\eta}f-\Delta^{n}_{\eta}b})+S(r,f).
\end{align}

We discuss two cases.

{\bf Case 1.1} $\Delta^{n}_{\eta}a\not\equiv\Delta^{n}_{\eta}b$.

{\bf Case 1.1.1} $\Delta^{n}_{\eta}a\not\equiv a,b$ and $\Delta^{n}_{\eta}b\not\equiv a,b$. Let $F_{1}=\frac{1}{F}$ and $G_{1}=\frac{1}{G}$. We only need to discuss $F_{1}$ is not a M$\ddot{o}$bius transformation of $G_{1}$.  We discuss two subcases.

{\bf Subcases 1.1.1.1} \quad $T(r,F_{1})\neq N(r,F_{1})+S(r,f)=N(r,\frac{1}{f-a})+S(r,f)$.  Then by Lemma 2.8, we have
\begin{align}
N(r,\frac{1}{f-a})=N(r,\frac{1}{\Delta^{n}_{\eta}f-a})=S(r,f).
\end{align}
Lemma 2.1 implies that
\begin{align}
N(r,\frac{1}{\Delta^{n}_{\eta}f-\Delta^{n}_{\eta}a})=S(r,f).
\end{align}
If $a\equiv \Delta^{n}_{\eta}a$, then by Lemma 2.1 and (3.1), we get
\begin{align}
m(r,e^{p})=m(r,\frac{\Delta^{n}_{\eta}f-\Delta^{n}_{\eta}a}{f-a})=S(r,f),
\end{align}
and then
\begin{align}
N(r,\frac{1}{f-b})+N_{0}(r)\leq N(r,\frac{1}{e^{p}-1})+S(r,f)=S(r,f).
\end{align}
(3.31) and (3.33) deduce
\begin{align}
m(r,e^{q})&=N(r,\frac{1}{e^{q}-1})+S(r,f)\leq N(r,\frac{1}{f-a})+N_{0}(r)\notag\\
&+S(r,f)=S(r,f).
\end{align}
Combining (3.23), (3.32) and (3.34), we have $T(r,f)=S(r,f)$, a contradiction.

Set
$$P_{2}=\frac{\Delta^{n}_{\eta}f-a}{f-a},\quad Q_{2}=\frac{(f-a)(b-\Delta^{n}_{\eta}a)}{(\Delta^{n}_{\eta}f-\Delta^{n}_{\eta}a)(b-a)}.$$
We ca see from above that
$$N(r,P_{2})+N(r,Q_{2})+N(r,\frac{1}{P_{2}})+N(r,\frac{1}{Q_{2}})=S(r,f).$$
With a similar method, we can obtain $T(r,f)=S(r,f)$, a contradiction.

{\bf Subcases 1.1.1.2} \quad $T(r,F_{1})= N(r,F_{1})+S(r,f)=N(r,\frac{1}{f-a})+S(r,f)$. It follows from (3.29) that
\begin{align}
N(r,\frac{1}{f-b})+N_{0}(r)=S(r,f).
\end{align}
As we set
$$P_{3}=\frac{\Delta^{n}_{\eta}f-b}{f-b},\quad Q_{3}=\frac{(f-b)(a-\Delta^{n}_{\eta}b)}{(\Delta^{n}_{\eta}f-\Delta^{n}_{\eta}b)(a-b)},$$
 and we can also obtain $T(r,f)=S(r,f)$, a contradiction.

{\bf Case 1.1.2} $\Delta^{n}_{\eta}a\equiv a$ and $\Delta^{n}_{\eta}b\equiv b$. Then by Lemma 2.1 and (3.23), we can get $T(r,f)=T(r,F)+S(r,f)=S(r,f)$, a contradiction.

{\bf Case 1.1.3} $\Delta^{n}_{\eta}a\equiv b$ and $\Delta^{n}_{\eta}b\equiv a$.  Then (3.23) implies
\begin{align}
f-a=\frac{d(t-e^{p})}{e^{2p}-t},\quad \Delta^{n}_{\eta}f-b=\frac{td(e^{p}-1)}{e^{2p}-t},
\end{align}
where $d=a-b$ and $t=e^{p+q}$. And then
\begin{align}
\Delta^{n}_{\eta}(f-a)=\Delta^{n}_{\eta}(\frac{d(t-e^{p})}{e^{2p}-t})=\frac{td(e^{p}-1)}{e^{2p}-t},
\end{align}
that is
\begin{align}
\sum_{i=1}^{2n+1}\sum_{l=0}^{n}A_{l,i}e^{ip_{l\eta}} + Be^{Q}\equiv0,
\end{align}
where $e^{Q}\equiv1$ is a constant and $A_{l,i}, B$ are small functions of $f$ for $l=0,1,2,\ldots,n$ and $i=1,2,3,\ldots,2n+1$. (3.1) and Lemma 2.1 deduce
\begin{align}
T(r,e^{p+q})=m(r,e^{p+q})=m(r,\frac{(\Delta^{n}_{\eta}f-\Delta^{n}_{\eta}b)(\Delta^{n}_{\eta}f-\Delta^{n}_{\eta}a)}{(f-b)(f-a)})=S(r,f),
\end{align}
\begin{align}
m(r,e^{p})=m(r,\frac{1}{f-a})+S(r,f),
\end{align}
and
\begin{align}
m(r,e^{q})=m(r,\frac{1}{f-b})+S(r,f).
\end{align}
Immediately,
\begin{align}
m(r,e^{p})=m(r,e^{q})+S(r,f).
\end{align}
We claim that for all $l=0,1,2,\ldots,n$ and $i=1,2,3,\ldots,2n+1$, $ip_{l\eta}-Q$ are not constants. Otherwise, if there exists a pair $(l,i)$ such that $ip_{l\eta}-Q$ is a constant, we can obtain that $m(r,e^{ip_{l\eta}})=im(r,e^{p})+S(r,f)=S(r,f)$. And then we will get from (3.23) and (3.42) that $T(r,f)=S(r,f)$, a contradiction. On the other hand, we have $T(r,A_{l,i})=S(r,f)$ and $T(r,B)=S(r,f)$ for $l=0,1,2,\ldots,n$ and $i=1,2,3,\ldots,2n+1$. Then by Lemma 2.11, we can get $A_{l,i}\equiv0$ for $l=0,1,2,\ldots,n$ and $i=1,2,3,\ldots,2n+1$ and $B\equiv 0$, that is $a\equiv b$, a contradiction.

{\bf Case 1.2} $\Delta^{n}_{\eta}a\equiv\Delta^{n}_{\eta}b$. Then by (3.24) we have
\begin{align}
T(r,f)+N_{0}(r)&=m(r,\frac{1}{f-a})+m(r,\frac{1}{f-b})+S(r,f)\notag\\
&\leq m(r,\frac{1}{f-a}+\frac{1}{f-b})+S(r,f)\notag\\
&\leq m(r,\frac{\Delta^{n}_{\eta}f-\Delta^{n}_{\eta}a}{f-a}+\frac{\Delta^{n}_{\eta}f-\Delta^{n}_{\eta}b}{f-b})
+m(r,\frac{1}{\Delta^{n}_{\eta}f-\Delta^{n}_{\eta}a})\notag\\
&+S(r,f)\leq m(r,\frac{1}{\Delta^{n}_{\eta}f-\Delta^{n}_{\eta}a})+S(r,f)\notag\\
&\leq T(r,\Delta^{n}_{\eta}f)-N(r,\frac{1}{\Delta^{n}_{\eta}f-\Delta^{n}_{\eta}a})+S(r,f),
\end{align}
it deduces that
\begin{align}
N(r,\frac{1}{\Delta^{n}_{\eta}f-\Delta^{n}_{\eta}a})+N_{0}(r)=S(r,f).
\end{align}
It follows from Lemma 2.7 that $\Delta^{n}_{\eta}a\equiv a$ or $\Delta^{n}_{\eta}a\equiv b$.  If  $\Delta^{n}_{\eta}a\equiv a$, then by Lemma 2.1 and (3.1) that
\begin{align}
T(r,e^{p})=m(r,e^{p})=m(r,\frac{\Delta^{n}_{\eta}f-\Delta^{n}_{\eta}a}{f-a})=S(r,f).
\end{align}
On the other hand, by Nevanlinna's Second Fundamental Theorem and (3.33), we have
\begin{align}
T(r,e^{q})&\leq N(r,\frac{1}{e^{q}-1})+S(r,f)\notag\\
&\leq N(r,\frac{1}{f-a})+N_{0}(r)=S(r,f).
\end{align}
By (3.23), (3.45) and (3.46) that  that $T(r,f)=S(r,f)$, a contradiction.\\

If  $\Delta^{n}_{\eta}b\equiv \Delta^{n}_{\eta}a\equiv b$, then using a similar proof of above, we can also obtain a contradiction.

{\bf Case 2}  $c\not\equiv\infty$.   Since $f$ and $\Delta^{n}_{\eta}f$ share $a,b,c$ CM, we set
$$F_{2}=\frac{f-a}{f-b}\cdot\frac{c-b}{c-a}, G_{2}=\frac{\Delta^{n}_{\eta}f-a}{\Delta^{n}_{\eta}f-b}\cdot\frac{c-b}{c-a},$$
and
\begin{align}
\frac{(f-b)(\Delta^{n}_{\eta}f-a)}{(f-a)(\Delta^{n}_{\eta}f-b)}=e^{h_{1}},  \frac{(f-c)(f_{\eta}-a)}{(f-a)(\Delta^{n}_{\eta}f-c)}=e^{h_{2}}, \frac{(f-b)(\Delta^{n}_{\eta}f-c)}{(f-c)(\Delta^{n}_{\eta}f-b)}=e^{h_{3}}.
\end{align}
And we know that $F_{2}$ and $G_{2}$ share $0,1,\infty$ CM. And we also have
\begin{align}
 N(r,\frac{1}{f-a})&=N_{1}(r,\frac{1}{f-a}),\notag\\
 N(r,\frac{1}{f-b})&=N_{1}(r,\frac{1}{f-b}),\notag\\
 N(r,\frac{1}{f-c})&=N_{1}(r,\frac{1}{f-c}).
\end{align}
We  claim that $F_{2}$ is not a M$\ddot{o}$bius transformation of $G_{2}$. Otherwise, then by Lemma 2.9, if (i) occurs, we can see that
 \begin{align}
 N(r,\frac{1}{f-a})= N(r,\frac{1}{\Delta^{n}_{\eta}f-a})=S(r,f), N(r,\frac{1}{f-b})= N(r,\frac{1}{\Delta^{n}_{\eta}f-b})=S(r,f).
\end{align}
By Lemma 2.11, we know that either $F_{2}\equiv G_{2}$, i.e $f\equiv \Delta^{n}_{\eta}f$. Or there is a non-constant entire function $h$ with $\rho(h)<1$ such that $F_{2}=e^{h}$ and $G_{2}=e^{-h}$. (3.47) implies $e^{h_{1}}\equiv \frac{1}{G_{2}^{2}}$, which is
\begin{align}
 T(r,e^{h_{1}})=2T(r,G_{3})+S(r,f)=2T(r,\Delta^{n}_{\eta}f)+S(r,f)=2T(r,f)+S(r,f).
\end{align}
Set
$$P_{4}=\frac{(f-a)(c-b)}{(f-b)(c-a)},\quad Q_{4}=\frac{(\Delta^{n}_{\eta}f-b)(c-a)}{(\Delta^{n}_{\eta}f-a)(c-b)}.$$
We can see from above that
$$N(r,P_{4})+N(r,Q_{4})+N(r,\frac{1}{P_{4}})+N(r,\frac{1}{Q_{4}})=S(r,f).$$
If for all integers $s$ and $t$ satisfying ($|s|+|t|>0$) such that $P_{4}^{s}Q_{4}^{t}\not\equiv1$, then by Lemma 2.12, we get
\begin{align}
T(r,f)&=N(r,\frac{1}{f-c})+S(r,f)\leq N_{0}(r,P_{4};1;Q_{4})+S(r,f)\notag\\
&\leq\varepsilon(T(r,P_{4})+T(r,Q_{4}))+S(r,f)\notag\\
&\leq2\varepsilon T(r,f)+S(r,f),
\end{align}
it follows from above and $\varepsilon<\frac{1}{2}$ that $T(r,f)=S(r,f)$, a contradiction. Therefore, there only exist two integer $s=1$ and $t=1$ such that $P_{4}Q_{4}\equiv1$. That is
$$e^{h_{1}}\equiv\frac{(f-a)(\Delta^{n}_{\eta}f-b)}{(f-b)(\Delta^{n}_{\eta}f-a)}\equiv1.$$
It follows from (3.50) that $T(r,f)=S(r,f)$, a contradiction.

If (ii) occurs, we can see that
 \begin{align}
 N(r,\frac{1}{f-b})= N(r,\frac{1}{\Delta^{n}_{\eta}f-b})=S(r,f), N(r,\frac{1}{f-c})= N(r,\frac{1}{\Delta^{n}_{\eta}f-c})=S(r,f).
\end{align}
And similar to the proof of (i), we can obtain a contradiction.

If (iii) occurs, we can see that
 \begin{align}
 N(r,\frac{1}{f-a})= N(r,\frac{1}{\Delta^{n}_{\eta}f-a})=S(r,f), N(r,\frac{1}{f-c})= N(r,\frac{1}{\Delta^{n}_{\eta}f-c})=S(r,f).
\end{align}
And similar to the proof of (i), we can obtain a contradiction.

If (iv) occurs, that is $F_{2}=dG_{2}$, where $d\neq0, 1$ is a finite constant. Then by Lemma 2.12, we can obtain either $F_{2}\equiv G_{2}$, or $N(r,F_{2})=S(r,f)$. But we can obtain two contradictions from both of cases.

If (v) occurs, we can obtain a contradiction with a similar  proof of (vi).

If (vi) occurs, we can see that
 \begin{align}
 N(r,\frac{1}{f-b})= N(r,\frac{1}{\Delta^{n}_{\eta}f-b})=S(r,f), N(r,\frac{1}{f-\frac{a(d-1)q+b}{(d-1)q-1}})=S(r,f),
\end{align}
where $q=\frac{c-b}{c-a}$, and $b\not\equiv \frac{a(d-1)q+b}{(d-1)q-1}$. And we can obtain a contradiction with a similar  proof of (i).

Therefore, $F_{2}$ is not a M$\ddot{o}$bius transformation of $G_{2}$.  We discuss two subcases.

 {\bf Subcase 2.1} $T(r,f)\neq N(r,\frac{1}{f-b})+S(r,f)$.  Then by Lemma 2.8 we know that
 \begin{align}
N(r,\frac{1}{f-b})=S(r,f).
\end{align}
Then  by (ii) of Lemma 2.7 and above, we have
\begin{eqnarray*}
\begin{aligned}
3T(r,f)+N_{0}(r)&=2T(r,f)+T(r,\Delta^{n}_{\eta}f)+m(r,\frac{1}{f-a})+m(r,\frac{1}{f-c})+S(r,f),
\end{aligned}
\end{eqnarray*}
which implies
 \begin{align}
T(r,f)+N_{0}(r)=T(r,\Delta^{n}_{\eta}f)+m(r,\frac{1}{f-a})+m(r,\frac{1}{f-c})+S(r,f).
\end{align}
Furthermore, by Lemma 2.1, we know
\begin{eqnarray*}
\begin{aligned}
T(r,f)&=m(r,\frac{1}{f-b})+S(r,f)\leq m(r,\frac{1}{\Delta^{n}_{\eta}f-\Delta^{n}_{\eta}b})+S(r,f)\\
&\leq T(r,\Delta^{n}_{\eta}f)-N(r,\frac{1}{\Delta^{n}_{\eta}f-\Delta^{n}_{\eta}b})+S(r,f),
\end{aligned}
\end{eqnarray*}
it follows from Lemma 2.8 that
\begin{align}
T(r,f)+N(r,\frac{1}{\Delta^{n}_{\eta}f-\Delta^{n}_{\eta}b})\leq T(r,g)+S(r,f).
\end{align}
If $\Delta^{n}_{\eta}b\equiv b$, then Lemma 2.1, Lemma 2.11 and (3.47) deduce
\begin{eqnarray*}
\begin{aligned}
T(r,e^{h_{1}})&=m(r,e^{h_{1}})=m(r,e^{-h_{1}})=m(r,\frac{(f-a)(\Delta^{n}_{\eta}f-b)}{(f-b)(\Delta^{n}_{\eta}f-a)})\\
&\leq m(r,\frac{\Delta^{n}_{\eta}f-b}{f-b})+m(r,\frac{f-a}{\Delta^{n}_{\eta}f-a})\\
&\leq m(r,\frac{1}{f-a})+N(r,\frac{\Delta^{n}_{\eta}f-a}{f-a})-N(r,\frac{f-a}{\Delta^{n}_{\eta}f-a})\\
&\leq  m(r,\frac{1}{f-a})+N(r,\Delta^{n}_{\eta}f)+N(r,\frac{1}{f-a})-N(r,f)\\
&-N(r,\frac{1}{\Delta^{n}_{\eta}f-a})+S(r,f)\leq m(r,\frac{1}{f-a})+S(r,f),
\end{aligned}
\end{eqnarray*}
which implies
\begin{align}
T(r,e^{h_{1}})+N(r,f)\leq m(r,\frac{1}{f-a})+N(r,\Delta^{n}_{\eta}f)+S(r,f).
\end{align}
We also have
\begin{align}
T(r,e^{h_{3}})+N(r,f)\leq m(r,\frac{1}{f-c})+N(r,\Delta^{n}_{\eta}f)+S(r,f).
\end{align}

Applying Lemma 2.3 to $e^{h_{1}}$ and $e^{h_{3}}$, we have
\begin{align}
T(r,e^{h_{1}})=\overline{N}(r,\frac{1}{e^{h_{1}}-1})+S(r,f),
\end{align}
and
\begin{align}
T(r,e^{h_{3}})=\overline{N}(r,\frac{1}{e^{h_{3}}-1})+S(r,f).
\end{align}

It follows from (3.55)-(3.56) and  (3.58)-(3.61) that
\begin{eqnarray*}
\begin{aligned}
N(r,\frac{1}{f-c})+N_{0}(r)+N(r,f)&=\overline{N}(r,\frac{1}{e^{h_{1}}-1})+N(r,f)+S(r,f)\\
&\leq m(r,\frac{1}{f-a})+N(r,\Delta^{n}_{\eta}f)+S(r,f),
\end{aligned}
\end{eqnarray*}
which is
\begin{align}
N(r,\frac{1}{f-c})+N_{0}(r)+N(r,f)\leq m(r,\frac{1}{f-a})+N(r,\Delta^{n}_{\eta}f)+S(r,f).
\end{align}
Similarly, we have
\begin{align}
N(r,\frac{1}{f-a})+N_{0}(r)+N(r,f)\leq m(r,\frac{1}{f-c})+N(r,\Delta^{n}_{\eta}f)+S(r,f).
\end{align}
From the fact that the zero of $f-g$ which are not $f-a$, $f-b$ nor $f-c$ are the zeros of $e^{h_{1}}-1$, that is
$N_{0}(\frac{1}{f-\Delta^{n}_{\eta}f})\leq N(\frac{1}{e^{h_{1}}-1})$. And hence by (3.62) and (3.63), we have
\begin{eqnarray*}
\begin{aligned}
&2N_{0}(r)+N(r,\frac{1}{f-a})+N(r,\frac{1}{f-c})+T(r,f)\\
&\leq m(r,\frac{1}{f-a})+m(r,\frac{1}{f-c})+2N(r,\Delta^{n}_{\eta}f)+S(r,f),
\end{aligned}
\end{eqnarray*}
which follows from (3.56) that
\begin{align}
N_{0}(r)+N(r,\frac{1}{f-a})+N(r,\frac{1}{f-c})\leq T(r,\Delta^{n}_{\eta}f)+S(r,f).
\end{align}
Then by (ii) of Lemma 2.7,  (3.64) and above, we can obtain $T(r,f)=S(r,f)$, a contradiction. Therefore $\Delta^{n}_{\eta}b\equiv a$ or $\Delta^{n}_{\eta}b\equiv c$. If $\Delta^{n}_{\eta}b\equiv a$, we set $F_{3}=\frac{1}{F_{2}}$ and $G_{3}=\frac{1}{G_{2}}$. Similar to the proof of above, we discuss the case that $F_{3}$ is not a M$\ddot{o}$bius transformation of $G_{3}$.

{\bf Subcase 2.1.1} $T(r,f)\neq N(r,\frac{1}{f-a})+S(r,f)$. Then by Lemma 2.8 we know that
 \begin{align}
N(r,\frac{1}{f-a})=S(r,f).
\end{align}
Then similar to the proof of the {\bf Case 2-(i)}, we can obtain a contradiction.

{\bf Subcase 2.1.2} $T(r,f)= N(r,\frac{1}{f-a})+S(r,f)$. Then by Lemma 2.8 we know that
 \begin{align}
T(r,f)=T(r,\Delta^{n}_{\eta}f)+S(r,f).
\end{align}
Then by (3.57), (3.66) and the fact that $T(r,f)= N(r,\frac{1}{f-a})+S(r,f)$, we obtain $T(r,f)=S(r,f)$, a contradiction.

If $\Delta^{n}_{\eta}b\equiv c$, we can also get a contradiction. Thus, neither $\Delta^{n}_{\eta}b\not\equiv a$, nor $\Delta^{n}_{\eta}b\not\equiv b$, nor $\Delta^{n}_{\eta}b\not\equiv c$. Then by {\bf Remark 2} and (3.57), we can obtain
 \begin{align}
3T(r,f)\leq T(r,\Delta^{n}_{\eta}f)+S(r,f).
\end{align}
From the fact that $f$ and $g$ share $a,b,c$ CM, and $N(r,\frac{1}{f-b})=S(r,f)$, we can get from the second fundamental theorem of Nevanlinna that
\begin{align}
T(r,\Delta^{n}_{\eta}f)&\leq N(r,\frac{1}{\Delta^{n}_{\eta}f-a})+N(r,\frac{1}{\Delta^{n}_{\eta}f-b})+N(r,\frac{1}{\Delta^{n}_{\eta}f-c})+S(r,f)\notag\\
&=N(r,\frac{1}{f-a})+N(r,\frac{1}{f-c})+S(r,f)\leq2T(r,f)+S(r,f),
\end{align}
which follows from (3.67) that $T(r,f)=S(r,f)$, a contradiction.

 {\bf Subcase 2.2} $T(r,f)= N(r,\frac{1}{f-b})+S(r,f)$.  Then by Lemma 2.7 and Lemma 2.8 we know that
 \begin{eqnarray*}
\begin{aligned}
2T(r,f)&=N(r,\frac{1}{f-a})+N(r,\frac{1}{f-b})+N(r,\frac{1}{f-c})+N_{0}(r)+S(r,f)\\
&=T(r,f)+N(r,\frac{1}{f-a})+N(r,\frac{1}{f-c})+N_{0}(r)+S(r,f)\\
&=N(r,\frac{1}{f-\Delta^{n}_{\eta}f})+S(r,f),
\end{aligned}
\end{eqnarray*}
it follows that
\begin{align}
&T(r,f)=N(r,\frac{1}{f-a})+N(r,\frac{1}{f-c})+N_{0}(r)+S(r,f),\notag\\
&2T(r,f)=N(r,\frac{1}{f-\Delta^{n}_{\eta}f})+S(r,f).
\end{align}
 That is
 \begin{align}
&T(r,f)+N_{0}(r)=m(r,\frac{1}{f-a})+m(r,\frac{1}{f-c})+S(r,f)\notag\\
&\leq m(r,\frac{\Delta^{n}_{\eta}f-\Delta^{n}_{\eta}a}{f-a})+m(r,\frac{\Delta^{n}_{\eta}f-\Delta^{n}_{\eta}c}{f-c})
+m(r,\frac{1}{\Delta^{n}_{\eta}f-\Delta^{n}_{\eta}a})+m(r,\frac{1}{\Delta^{n}_{\eta}f-\Delta^{n}_{\eta}c})\notag\\
&+S(r,f)\leq m(r,\frac{1}{\Delta^{n}_{\eta}f-\Delta^{n}_{\eta}a})+m(r,\frac{1}{\Delta^{n}_{\eta}f-\Delta^{n}_{\eta}c})+S(r,f).
\end{align}

We discuss two subcase.

{\bf Subcase 2.2.1} $\Delta^{n}_{\eta}a\not\equiv\Delta^{n}_{\eta}c$.

{\bf Subcase 2.2.1.1} $\Delta^{n}_{\eta}a\not\equiv a,c$ and $\Delta^{n}_{\eta}c\not\equiv a,c$. Set $F_{4}=\frac{1}{F_{2}}$ and $G_{4}=\frac{1}{G_{2}}$. With the same way to prove {\bf Subcase 2.1}, we only need to discuss $T(r,f)= N(r,\frac{1}{f-a})+S(r,f)$. It follows from (3.79) that
$$N(r,\frac{1}{f-c})+N_{0}(r)=S(r,f).$$
But in this case, we can also obtain a contradiction.

{\bf Subcase 2.2.2.2} $\Delta^{n}_{\eta}a\equiv a$ and $\Delta^{n}_{\eta}c\equiv c$. We set
\begin{align}
J&=(f-a)(\Delta^{n}_{\eta}a-\Delta^{n}_{\eta}c)-(\Delta^{n}_{\eta}f-\Delta^{n}_{\eta}a)(a-c)\notag\\
&=(f-c)(\Delta^{n}_{\eta}a-\Delta^{n}_{\eta}c)-(\Delta^{n}_{\eta}f-\Delta^{n}_{\eta}c)(a-c).
\end{align}
We claim that $J\not\equiv0$. Otherwise,  by the equalities $\Delta^{n}_{\eta}a\equiv a$, $\Delta^{n}_{\eta}c\equiv c$, and the definition of $J$, we can get $f\equiv \Delta^{n}_{\eta}f$, a contradiction. So $J\not\equiv0$. Then it is easy to see from (3.69) that
\begin{align}
 T(r,f)+N_{0}(r)&=m(r,\frac{1}{f-a})+m(r,\frac{1}{f-c})+S(r,f)\notag\\
 &=m(r,\frac{1}{f-a}+\frac{1}{f-c})+S(r,f)\notag\\
  &=m(r,\frac{J}{f-a}+\frac{J}{f-c})+m(r,\frac{1}{J})\notag\\
 & \leq T(r,f-\Delta^{n}_{\eta}f)-N(r,\frac{1}{f-\Delta^{n}_{\eta}f})+S(r,f)\notag\\
 &\leq T(r,f)-N(r,\frac{1}{f-\Delta^{n}_{\eta}f})+S(r,f)=S(r,f)
 \end{align}
which implies
\begin{align}
 T(r,f)=S(r,f),
\end{align}
a contradiction.

{\bf Subcase 2.2.2.3} $\Delta^{n}_{\eta}a\equiv c$ and $\Delta^{n}_{\eta}c\equiv a$.  We set
$$F_{4}=\frac{f-b}{f-a}\cdot\frac{c-a}{c-b}, G_{4}=\frac{\Delta^{n}_{\eta}f-b}{\Delta^{n}_{\eta}f-a}\cdot\frac{c-a}{c-b}.$$
Since $f\not\equiv \Delta^{n}_{\eta}f$, $F_{4}\not\equiv G_{4}$. And with a similar discussion of above, we can obtain $\Delta^{n}_{\eta}b\equiv c$ and $\Delta^{n}_{\eta}c\equiv b$. It follows from $\Delta^{n}_{\eta}c\equiv a$ that $a\equiv b$, a contradiction.

{\bf Case 2.2.2} $\Delta^{n}_{\eta}a\equiv\Delta^{n}_{\eta}c$. Then by (3.70) we have
\begin{align}
T(r,f)+N_{0}(r)&=m(r,\frac{1}{f-a})+m(r,\frac{1}{f-c})+S(r,f)\notag\\
&\leq m(r,\frac{1}{f-a}+\frac{1}{f-c})+S(r,f)\notag\\
&\leq m(r,\frac{\Delta^{n}_{\eta}f-\Delta^{n}_{\eta}a}{f-a}+\frac{\Delta^{n}_{\eta}f-\Delta^{n}_{\eta}c}{f-c})
+m(r,\frac{1}{\Delta^{n}_{\eta}f-\Delta^{n}_{\eta}a})\notag\\
&+S(r,f)\leq m(r,\frac{1}{\Delta^{n}_{\eta}f-\Delta^{n}_{\eta}a})+S(r,f)\notag\\
&\leq T(r,g)-N(r,\frac{1}{\Delta^{n}_{\eta}f-\Delta^{n}_{\eta}a})+S(r,f),
\end{align}
it deduces that
\begin{align}
N(r,\frac{1}{\Delta^{n}_{\eta}f-\Delta^{n}_{\eta}a})+N_{0}(r)=S(r,f).
\end{align}
If  $\Delta^{n}_{\eta}a\equiv a$, then with the same proof of  $\Delta_{\eta}b\equiv b$ in {\bf Subcase 2.1}, and by (3.69) and $T(r,f)= N(r,\frac{1}{f-b})+S(r,f)$, we can have
 \begin{align}
T(r,e^{h_{2}})\leq m(r,\frac{1}{f-c})+S(r,f)=S(r,f).
\end{align}
 And thus,
 \begin{align}
\overline{N}_{0}(r)+\overline{N}(r,\frac{1}{f-b})\leq m(r,\frac{1}{f-b})+S(r,f),
\end{align}
which follows from (3.77) that $N(r,\frac{1}{f-b})=S(r,f)$, and furthermore we get $T(r,f)=S(r,f)$, a contradiction. If $\Delta^{n}_{\eta}a=c$, then we can also obtain  a contradiction with a same method of proving {\bf Subcase 2.1}.

\

{\bf Conflict of Interest}  The author declares that there is  no conflict of interest regarding the publication of this paper.
\

{\bf Acknowledgements} The author would like to thank to anonymous referees for their helpful comments.


\end{document}